\newtheorem{theorem}{Theorem}
\theoremstyle{plain}
\newtheorem{definition}{Definition}
\newtheorem{example}{Example}
\newtheorem{lemma}{Lemma}
\newtheorem{proposition}{Proposition}
\numberwithin{equation}{section}
\begin{document}
\title[On Harmonically Convex Functions]{Hermite-Hadamard type inequalities
for harmonically convex functions}
\author{\.{I}mdat \.{I}\c{s}can}
\address{Department of Mathematics, Faculty of Arts and Sciences,\\
Giresun University, 28100, Giresun, Turkey.}
\email{imdati@yahoo.com}
\subjclass[2000]{Primary 26D15; Secondary 26A51}
\keywords{Harmonically convex, Hermite-Hadamard type inequality}

\begin{abstract}
The author introduce the concept of harmonically convex functions and
establish some Hermite-Hadamard type inequalities of these classes of
functions.
\end{abstract}

\maketitle

\section{Introduction}

Let $f:I\subset \mathbb{R\rightarrow R}$ be a convex function defined on the
interval $I$ of real numbers and $a,b\in I$ with $a<b$. The following
inequality%
\begin{equation}
f\left( \frac{a+b}{2}\right) \leq \frac{1}{b-a}\dint\limits_{a}^{b}f(x)dx%
\leq \frac{f(a)+f(b)}{2}  \label{1-1}
\end{equation}

holds. This double inequality is known in the literature as Hermite-Hadamard
integral inequality for convex functions. Note that some of the classical
inequalities for means can be derived from (\ref{1-1}) for appropriate
particular selections of the mapping $f$. Both inequalities hold in the
reversed direction if $f$ is concave. For some results which generalize,
improve and extend the inequalities(\ref{1-1}) we refer the reader to the
recent papers (see \cite{DP00,I13,S11,SOD10,ZJQ12} ).

The main purpose of this paper is to introduce the concept of harmonically
convex functions and establish some results connected with the right-hand
side of new inequalities similar to the inequality (\ref{1-1}) for these
classes of functions. Some applications to special means of positive real
numbers are also given.

\section{Main Results}

\begin{definition}
Let $I\subset 
\mathbb{R}
\backslash \left\{ 0\right\} $ be an real interval. A function $%
f:I\rightarrow 
\mathbb{R}
$ is said to be harmonically convex, if \ 
\begin{equation}
f\left( \frac{xy}{tx+(1-t)y}\right) \leq tf(y)+(1-t)f(x)  \label{2-1}
\end{equation}%
for all $x,y\in I$ and $t\in \lbrack 0,1]$. If the inequality in (\ref{1-1})
is reversed, then $f$ is said to be harmonically concave.
\end{definition}

\begin{example}
Let $f:\left( 0,\infty \right) \rightarrow 
\mathbb{R}
,\ f(x)=x,$ and $g:\left( -\infty ,0\right) \rightarrow 
\mathbb{R}
,\ g(x)=x,$ then $f$ is a harmonically convex function and $g$ is a
harmonically concave function.
\end{example}

The following proposition is obvious from this example:

\begin{proposition}
Let $I\subset 
\mathbb{R}
\backslash \left\{ 0\right\} $ be an real interval and $f:I\rightarrow 
\mathbb{R}
$ is a function, then ;

\begin{itemize}
\item if $I\subset \left( 0,\infty \right) $ and f is convex and
nondecreasing function then f is harmonically convex.

\item if $I\subset \left( 0,\infty \right) $ and f is harmonically convex
and nonincreasing function then f is convex.

\item if $I\subset \left( -\infty ,0\right) $ and f is harmonically convex
and nondecreasing function then f is convex.

\item if $I\subset \left( -\infty ,0\right) $ and f is convex and
nonincreasing function then f is a harmonically convex.
\end{itemize}
\end{proposition}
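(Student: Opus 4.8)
The plan is to derive all four implications from a single elementary fact — the comparison between the weighted arithmetic and harmonic means — combined with the monotonicity hypothesis, applied in the right order; nothing beyond the definitions of convexity and harmonic convexity is needed. (Equivalently one may pass to $g(s):=f(1/s)$ on $J:=\{1/x:x\in I\}$: the substitution $u=1/x$, $v=1/y$ turns (\ref{2-1}) into the ordinary convexity inequality for $g$, so $f$ is harmonically convex on $I$ iff $g$ is convex on $J$, while $s\mapsto1/s$ reverses order; each of the four statements is then an instance of the composition rule ``a convex nondecreasing function of a convex function is convex, and a convex nonincreasing function of a concave function is convex''. I would present the slightly shorter direct argument below.)

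First I would record the mean inequality. Since $s\mapsto1/s$ is convex on $(0,\infty)$, for $x,y>0$ and $t\in[0,1]$ one has $\tfrac{t}{y}+\tfrac{1-t}{x}\ge\tfrac{1}{ty+(1-t)x}$, i.e.
\[
\frac{xy}{tx+(1-t)y}\le ty+(1-t)x,
\]
with both sides lying in $I$ because each lies between $\min\{x,y\}$ and $\max\{x,y\}$. On $(-\infty,0)$ the map $s\mapsto1/s$ is concave, so the identical computation (using that reciprocation reverses order on the negative half-line as well) gives the reversed inequality $\tfrac{xy}{tx+(1-t)y}\ge ty+(1-t)x$ for $x,y<0$.

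Next I would dispatch the four cases. For ``convex $\Rightarrow$ harmonically convex'' (the first and fourth statements): with $x,y\in I$ and $t\in[0,1]$, on $I\subset(0,\infty)$ the mean inequality and upward monotonicity of $f$ give $f\bigl(\tfrac{xy}{tx+(1-t)y}\bigr)\le f\bigl(ty+(1-t)x\bigr)$, and convexity gives $f\bigl(ty+(1-t)x\bigr)\le tf(y)+(1-t)f(x)$; the chain is precisely (\ref{2-1}). On $I\subset(-\infty,0)$ the mean inequality is reversed but $f$ is nonincreasing, so the first step flips twice and the same chain survives. For ``harmonically convex $\Rightarrow$ convex'' (the second and third statements): put $H:=\tfrac{xy}{(1-t)x+ty}\in I$. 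On $I\subset(0,\infty)$ the mean inequality (with $t$ and $1-t$ interchanged) gives $tx+(1-t)y\ge H$, hence $f\bigl(tx+(1-t)y\bigr)\le f(H)$ since $f$ is nonincreasing, and harmonic convexity of $f$ at $x,y$ with parameter $1-t$ gives $f(H)\le(1-t)f(y)+tf(x)=tf(x)+(1-t)f(y)$, which is the convexity inequality for $f$. On $I\subset(-\infty,0)$ the mean inequality reverses and $f$ is nondecreasing, so the two reversals again cancel.

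I do not expect a genuine obstacle — the content is one arithmetic–harmonic mean inequality used four times — so the only thing demanding care is bookkeeping: matching the weight $t$ with $f(y)$ rather than $f(x)$ as in (\ref{2-1}), interchanging $t\leftrightarrow1-t$ correctly in the ``harmonically convex $\Rightarrow$ convex'' direction, and remembering that $s\mapsto1/s$ is \emph{concave} on $(-\infty,0)$, which is exactly what makes the two negative-interval cases pair ``harmonically convex'' with the opposite monotonicity from the two positive ones. One should also check explicitly that the auxiliary points $ty+(1-t)x$ and $H$ lie in $I$, which holds because each lies between $\min\{x,y\}$ and $\max\{x,y\}$.
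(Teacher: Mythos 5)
Your argument is correct and complete. The key mean comparison $\frac{xy}{tx+(1-t)y}\le ty+(1-t)x$ for $x,y>0$ (reversed for $x,y<0$, since $s\mapsto 1/s$ is concave there and reciprocation is order-reversing on the negative half-line) is right, the weights are matched to the paper's definition (\ref{2-1}) correctly ($t$ paired with $f(y)$), the interchange $t\leftrightarrow 1-t$ in the two ``harmonically convex $\Rightarrow$ convex'' cases is handled properly, and you rightly verify that the auxiliary points $ty+(1-t)x$ and $H$ lie between $\min\{x,y\}$ and $\max\{x,y\}$, hence in the interval $I$. Note, however, that the paper offers no proof of this proposition at all: it is stated immediately after the example $f(x)=x$ on $(0,\infty)$ and $g(x)=x$ on $(-\infty,0)$ with the remark that it is ``obvious from this example,'' so there is no argument in the paper to compare yours with --- your write-up supplies what the paper leaves implicit. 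Of your two routes, the direct AM--HM comparison plus monotonicity is the shorter, while the substitution $g(s)=f(1/s)$, which converts harmonic convexity of $f$ into ordinary convexity of $g$ and reduces all four items to the standard composition rules, is the more systematic and explains structurally why the negative half-line pairs harmonic convexity with the opposite monotonicity from the positive one; either is a legitimate and complete proof.
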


The following result of the Hermite-Hadamard type holds.

\begin{theorem}
\label{2.2}Let $f:I\subset 
\mathbb{R}
\backslash \left\{ 0\right\} \rightarrow 
\mathbb{R}
$ be an harmonically convex function and $a,b\in I$ with $a<b.$ If $f\in
L[a,b]$ then the following inequalities hold%
\begin{equation}
f\left( \frac{2ab}{a+b}\right) \leq \frac{ab}{b-a}\dint\limits_{a}^{b}\frac{%
f(x)}{x^{2}}dx\leq \frac{f(a)+f(b)}{2}.  \label{2-2}
\end{equation}%
The \ above inequalities are sharp.
\end{theorem}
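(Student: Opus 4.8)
The plan is to derive both estimates from the defining inequality (\ref{2-1}) together with the change of variables $u=\frac{ab}{ta+(1-t)b}$, which linearises the action of harmonic convexity on the interval $[a,b]$.

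\emph{Right-hand inequality.} First I would put $x=a$, $y=b$ in (\ref{2-1}) to get $f\!\left(\frac{ab}{ta+(1-t)b}\right)\le tf(b)+(1-t)f(a)$ for all $t\in[0,1]$, and then integrate in $t$ over $[0,1]$. The right-hand side integrates at once to $\frac{f(a)+f(b)}{2}$. For the left-hand side I would substitute $u=\frac{ab}{ta+(1-t)b}$; one checks $t=\frac{b(u-a)}{u(b-a)}$, $du=\frac{(b-a)u^{2}}{ab}\,dt$, and $u=a\leftrightarrow t=0$, $u=b\leftrightarrow t=1$, so that $\int_{0}^{1}f\!\left(\frac{ab}{ta+(1-t)b}\right)dt=\frac{ab}{b-a}\int_{a}^{b}\frac{f(u)}{u^{2}}\,du$. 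This gives the second inequality in (\ref{2-2}); the integral is finite because $f\in L[a,b]$ and $1/u^{2}$ is bounded on $[a,b]$.

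\emph{Left-hand inequality.} Here I would apply (\ref{2-1}) with parameter $t=\frac{1}{2}$ to the two points $p=\frac{ab}{ta+(1-t)b}$ and $q=\frac{ab}{tb+(1-t)a}$. The crucial remark is that $\frac1p+\frac1q=\frac{a+b}{ab}$ for every $t$, hence the harmonic combination $\frac{pq}{\frac{1}{2}p+\frac{1}{2}q}=\frac{2ab}{a+b}$ equals the argument on the left of (\ref{2-2}), and so $f\!\left(\frac{2ab}{a+b}\right)\le\frac{1}{2}\big(f(p)+f(q)\big)$. Integrating over $t\in[0,1]$ and observing that $\int_{0}^{1}f(p)\,dt=\int_{0}^{1}f(q)\,dt=\frac{ab}{b-a}\int_{a}^{b}\frac{f(x)}{x^{2}}\,dx$ (the second equality after the reflection $t\mapsto 1-t$) yields the first inequality.

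Finally, to see that the inequalities are sharp, I would take $f$ to be a nonzero constant function, for which all three terms in (\ref{2-2}) are equal. I do not expect a genuine obstacle; the only point requiring care is the change of variables above — in particular checking the endpoint correspondence and that $u$ ranges over $[a,b]\subset I$, which holds since $\frac{ab}{ta+(1-t)b}$ always lies between $a$ and $b$ — together with the routine integrability bookkeeping guaranteed by $f\in L[a,b]$.
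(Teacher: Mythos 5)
Your proof is correct and follows essentially the same route as the paper: the right-hand inequality comes from integrating (\ref{2-1}) with $x=a$, $y=b$ and the substitution $u=\frac{ab}{ta+(1-t)b}$, the left-hand one from applying harmonic convexity with $t=\frac12$ to the symmetric pair of points whose reciprocals sum to $\frac{a+b}{ab}$, and sharpness from a constant function. Your explicit verification of the change of variables and of the identity $\frac1p+\frac1q=\frac{a+b}{ab}$ merely spells out details the paper leaves implicit.
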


\begin{proof}
Since $f:I\rightarrow 
\mathbb{R}
$ be an harmonically convex, we have, for all $x,y\in I$ (with $t=\frac{1}{2}
$ in the inequality (\ref{2-1}) )%
\begin{equation*}
f\left( \frac{2xy}{x+y}\right) \leq \frac{f(y)+f(x)}{2}
\end{equation*}%
Choosing $x=\frac{ab}{ta+(1-t)b},\ y=\frac{ab}{tb+(1-t)a}$, we get%
\begin{equation*}
f\left( \frac{2ab}{a+b}\right) \leq \frac{f\left( \frac{ab}{tb+(1-t)a}%
\right) +f\left( \frac{ab}{ta+(1-t)b}\right) }{2}
\end{equation*}%
Further, integrating for $t\in \lbrack 0,1]$, we have%
\begin{equation}
f\left( \frac{2ab}{a+b}\right) \leq \frac{1}{2}\left[ \dint\limits_{0}^{1}f%
\left( \frac{ab}{tb+(1-t)a}\right) dt+\dint\limits_{0}^{1}f\left( \frac{ab}{%
ta+(1-t)b}\right) dt\right]  \label{2-3}
\end{equation}%
Since each of the integrals is equal to $\frac{ab}{b-a}\dint\limits_{a}^{b}%
\frac{f(x)}{x^{2}}dx$, we obtain the inequality (\ref{2-2}) from (\ref{2-3}).

The proof of the second inequality follows by using (\ref{2-1}) with $x=a$
and $y=b$ and integrating with respect to $t$ over $[0,1]$.

Now, consider the function $f:\left( 0,\infty \right) \rightarrow 
\mathbb{R}
,$ $f(x)=1.$ thus%
\begin{eqnarray*}
1 &=&f\left( \frac{xy}{tx+(1-t)y}\right) \\
&=&tf(y)+(1-t)f(x)=1
\end{eqnarray*}%
for all $x,y\in \left( 0,\infty \right) $ and $t\in \lbrack 0,1].$ Therefore 
$f$ is harmonically convex on $\left( 0,\infty \right) .$ We also have%
\begin{equation*}
f\left( \frac{2ab}{a+b}\right) =1,\ \frac{ab}{b-a}\dint\limits_{a}^{b}\frac{%
f(x)}{x^{2}}dx=1,\ \text{and }\frac{f(a)+f(b)}{2}=1
\end{equation*}%
which shows us the inequalities (\ref{2-2}) are sharp.
\end{proof}

For finding some new inequalities of Hermite-Hadamard type for functions
whose derivatives are harmonically convex, we need a simple lemma below.

\begin{lemma}
\label{2.1}Let $f:I\subset 
\mathbb{R}
\backslash \left\{ 0\right\} \rightarrow 
\mathbb{R}
$ be a differentiable function on $I^{\circ }$ and $a,b\in I$ with $a<b$. If 
$f^{\prime }\in L[a,b]$ then 
\begin{eqnarray}
&&\frac{f(a)+f(b)}{2}-\frac{ab}{b-a}\dint\limits_{a}^{b}\frac{f(x)}{x^{2}}dx
\notag \\
&=&\frac{ab\left( b-a\right) }{2}\dint\limits_{0}^{1}\frac{1-2t}{\left(
tb+(1-t)a\right) ^{2}}f^{\prime }\left( \frac{ab}{tb+(1-t)a}\right) dt
\label{2-4}
\end{eqnarray}
\end{lemma}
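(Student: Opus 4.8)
The plan is to start from the right-hand side of $(\ref{2-4})$ and reduce it to the left-hand side by one integration by parts followed by a change of variable. First I would set
\[
\varphi(t)=\frac{ab}{tb+(1-t)a},\qquad t\in[0,1],
\]
and record the elementary facts $\varphi(0)=b$, $\varphi(1)=a$, and
\[
\varphi'(t)=-\frac{ab(b-a)}{(tb+(1-t)a)^{2}}.
\]
By the chain rule this gives $\dfrac{d}{dt}f(\varphi(t))=-\dfrac{ab(b-a)}{(tb+(1-t)a)^{2}}f'(\varphi(t))$, so the integrand occurring on the right of $(\ref{2-4})$ is precisely $-\tfrac{1}{2}(1-2t)\dfrac{d}{dt}f(\varphi(t))$. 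The hypotheses that $f$ is differentiable on $I^{\circ}$ with $f'\in L[a,b]$ ensure that $t\mapsto f(\varphi(t))$ is absolutely continuous on $[0,1]$, which is exactly what is needed to justify the steps below and to keep every integral appearing finite.

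Next I would integrate by parts with $u=1-2t$ (so $du=-2\,dt$) and $dv=d\!\left(f(\varphi(t))\right)$. The boundary term is $\left[(1-2t)f(\varphi(t))\right]_{0}^{1}=-f(a)-f(b)$, and hence
\[
\text{RHS of }(\ref{2-4})=-\frac{1}{2}\left(-f(a)-f(b)+2\int_{0}^{1}f(\varphi(t))\,dt\right)=\frac{f(a)+f(b)}{2}-\int_{0}^{1}f(\varphi(t))\,dt.
\]
It then remains only to identify $\displaystyle\int_{0}^{1}f(\varphi(t))\,dt$ with $\dfrac{ab}{b-a}\displaystyle\int_{a}^{b}\dfrac{f(x)}{x^{2}}\,dx$.

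For this last step I would substitute $x=\varphi(t)$. The clean way to handle the Jacobian is to note that $x=\varphi(t)$ is equivalent to $tb+(1-t)a=ab/x$, so that $(tb+(1-t)a)^{-2}=x^{2}/(a^{2}b^{2})$ and $dx=\varphi'(t)\,dt=-\dfrac{(b-a)}{ab}x^{2}\,dt$; as $t$ runs from $0$ to $1$ the variable $x$ runs from $b$ to $a$, and reversing the limits of integration absorbs the overall minus sign. Substituting and combining with the previous display produces exactly the identity $(\ref{2-4})$.

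The computations are entirely routine; the one place that demands care is the change of variables — rewriting the factor $(tb+(1-t)a)^{-2}$ in terms of $x$ through the relation $tb+(1-t)a=ab/x$, and tracking the orientation of the limits so the signs cancel correctly. Everything else is a single standard integration by parts.
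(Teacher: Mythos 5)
Your argument is correct and is essentially the same as the paper's own proof: starting from the right-hand side, one integration by parts (with the boundary term producing $\tfrac{f(a)+f(b)}{2}$) followed by the substitution $x=\frac{ab}{tb+(1-t)a}$ to turn $\int_{0}^{1}f\bigl(\varphi(t)\bigr)\,dt$ into $\frac{ab}{b-a}\int_{a}^{b}\frac{f(x)}{x^{2}}\,dx$. The only difference is presentational bookkeeping of the parts step, so nothing further is needed.
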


\begin{proof}
Let%
\begin{equation*}
I^{\ast }=\frac{ab\left( b-a\right) }{2}\dint\limits_{0}^{1}\frac{1-2t}{%
\left( tb+(1-t)a\right) ^{2}}f^{\prime }\left( \frac{ab}{tb+(1-t)a}\right)
dt.
\end{equation*}%
By integrating by part, we have%
\begin{equation*}
I^{\ast }=\left. \frac{\left( 2t-1\right) }{2}f\left( \frac{ab}{tb+(1-t)a}%
\right) \right\vert _{0}^{1}-\dint\limits_{0}^{1}f\left( \frac{ab}{tb+(1-t)a}%
\right) dt
\end{equation*}%
Setting $x=\frac{ab}{tb+(1-t)a},$ $dx=\frac{-ab(b-a)}{\left(
tb+(1-t)a\right) ^{2}}dt=\frac{-x^{2}(b-a)}{ab}dt$, we obtain%
\begin{equation*}
I^{\ast }=\frac{f(a)+f(b)}{2}-\frac{ab}{b-a}\dint\limits_{a}^{b}\frac{f(x)}{%
x^{2}}dx
\end{equation*}%
which gives the desired representation (\ref{2-4}).
\end{proof}

\begin{theorem}
Let $f:I\subset \left( 0,\infty \right) \rightarrow 
\mathbb{R}
$ be a differentiable function on $I^{\circ }$, $a,b\in I$ with $a<b,$ and $%
f^{\prime }\in L[a,b].$ If $\left\vert f^{\prime }\right\vert ^{q}$ is
harmonically convex on $[a,b]$ for $q\geq 1,$ then%
\begin{eqnarray}
&&\left\vert \frac{f(a)+f(b)}{2}-\frac{ab}{b-a}\dint\limits_{a}^{b}\frac{f(x)%
}{x^{2}}dx\right\vert  \label{2-5} \\
&\leq &\frac{ab\left( b-a\right) }{2}\lambda _{1}^{1-\frac{1}{q}}\left[
\lambda _{2}\left\vert f^{\prime }\left( a\right) \right\vert ^{q}+\lambda
_{3}\left\vert f^{\prime }\left( b\right) \right\vert ^{q}\right] ^{\frac{1}{%
q}},  \notag
\end{eqnarray}%
where 
\begin{eqnarray*}
\lambda _{1} &=&\frac{1}{ab}-\frac{2}{\left( b-a\right) ^{2}}\ln \left( 
\frac{\left( a+b\right) ^{2}}{4ab}\right) , \\
\lambda _{2} &=&\frac{-1}{b\left( b-a\right) }+\frac{3a+b}{\left( b-a\right)
^{3}}\ln \left( \frac{\left( a+b\right) ^{2}}{4ab}\right) , \\
\lambda _{3} &=&\frac{1}{a\left( b-a\right) }-\frac{3b+a}{\left( b-a\right)
^{3}}\ln \left( \frac{\left( a+b\right) ^{2}}{4ab}\right) \\
&=&\lambda _{1}-\lambda _{2}
\end{eqnarray*}
\end{theorem}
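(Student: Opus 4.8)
The plan is to start from the integral identity in Lemma \ref{2.1}, pass to absolute values, and then estimate the resulting weighted integral of $\left\vert f^{\prime}\right\vert$ by the power-mean (weighted H\"older) inequality, using harmonic convexity of $\left\vert f^{\prime}\right\vert ^{q}$ to dominate the integrand by an affine combination of $\left\vert f^{\prime}(a)\right\vert ^{q}$ and $\left\vert f^{\prime}(b)\right\vert ^{q}$. First, from (\ref{2-4}) and the triangle inequality for integrals,
\begin{equation*}
\left\vert \frac{f(a)+f(b)}{2}-\frac{ab}{b-a}\int_{a}^{b}\frac{f(x)}{x^{2}}dx\right\vert \leq \frac{ab(b-a)}{2}\int_{0}^{1}\frac{\left\vert 1-2t\right\vert }{\left( tb+(1-t)a\right) ^{2}}\left\vert f^{\prime }\left( \frac{ab}{tb+(1-t)a}\right) \right\vert dt .
\end{equation*}
Viewing $\frac{\left\vert 1-2t\right\vert }{\left( tb+(1-t)a\right) ^{2}}$ as a weight and applying the power-mean inequality with exponents $1-\frac{1}{q}$ and $\frac{1}{q}$ pulls out a factor $\lambda _{1}^{1-1/q}$, where $\lambda _{1}=\int_{0}^{1}\frac{\left\vert 1-2t\right\vert }{\left( tb+(1-t)a\right) ^{2}}dt$, and leaves the factor $\bigl( \int_{0}^{1}\frac{\left\vert 1-2t\right\vert }{\left( tb+(1-t)a\right) ^{2}}\left\vert f^{\prime }\left( \frac{ab}{tb+(1-t)a}\right) \right\vert ^{q}dt\bigr) ^{1/q}$. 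For $q=1$ this step is vacuous and one proceeds directly.

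Next I would use harmonic convexity of $\left\vert f^{\prime }\right\vert ^{q}$: taking $x=b$, $y=a$ in (\ref{2-1}) gives $\left\vert f^{\prime }\left( \frac{ab}{tb+(1-t)a}\right) \right\vert ^{q}\leq t\left\vert f^{\prime }(a)\right\vert ^{q}+(1-t)\left\vert f^{\prime }(b)\right\vert ^{q}$. Substituting this bound and integrating term by term yields
\begin{equation*}
\int_{0}^{1}\frac{\left\vert 1-2t\right\vert }{\left( tb+(1-t)a\right) ^{2}}\left\vert f^{\prime }\left( \frac{ab}{tb+(1-t)a}\right) \right\vert ^{q}dt\leq \lambda _{2}\left\vert f^{\prime }(a)\right\vert ^{q}+\lambda _{3}\left\vert f^{\prime }(b)\right\vert ^{q},
\end{equation*}
with $\lambda _{2}=\int_{0}^{1}\frac{t\left\vert 1-2t\right\vert }{\left( tb+(1-t)a\right) ^{2}}dt$ and $\lambda _{3}=\int_{0}^{1}\frac{(1-t)\left\vert 1-2t\right\vert }{\left( tb+(1-t)a\right) ^{2}}dt$. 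Since $t+(1-t)=1$, adding the defining integrals gives $\lambda _{2}+\lambda _{3}=\lambda _{1}$, i.e. $\lambda _{3}=\lambda _{1}-\lambda _{2}$ as claimed; assembling the three pieces with the prefactor $\frac{ab(b-a)}{2}$ produces (\ref{2-5}).

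It then remains only to evaluate $\lambda _{1}$ and $\lambda _{2}$ in closed form, which is the main computational obstacle. I would substitute $u=tb+(1-t)a$, so that $du=(b-a)\,dt$ and $1-2t=\frac{a+b-2u}{b-a}$, rewriting $\lambda _{1}=\frac{1}{(b-a)^{2}}\int_{a}^{b}\frac{\left\vert a+b-2u\right\vert }{u^{2}}du$ and similarly for $\lambda _{2}$ (with an extra factor $\frac{u-a}{b-a}$ in the integrand). Each integral is then split at the midpoint $u=\frac{a+b}{2}$, where $a+b-2u$ changes sign; on each piece the integrand is a combination of $u^{-2}$ and $u^{-1}$ (and, for $\lambda _{2}$, also a constant), so the antiderivatives involve only $u^{-1}$ and $\ln u$. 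Collecting the evaluated pieces, the rational parts combine to $\frac{1}{ab}$ (respectively to the $\frac{-1}{b(b-a)}$ term of $\lambda _{2}$), and the logarithmic parts combine through $\ln\frac{2a}{a+b}+\ln\frac{2b}{a+b}$ into the single term $-\ln\frac{(a+b)^{2}}{4ab}$, yielding the stated formulas for $\lambda _{1}$ and $\lambda _{2}$. The only delicate points are keeping track of the signs of the two pieces after the midpoint split and the bookkeeping that turns the elementary primitives into exactly the displayed expressions; once $\lambda _{1}$ and $\lambda _{2}$ are verified, $\lambda _{3}$ is obtained from $\lambda _{1}-\lambda _{2}$ and the proof is complete.
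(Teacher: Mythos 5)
Your proposal is correct and follows essentially the same route as the paper: start from Lemma \ref{2.1}, apply the power-mean inequality to pull out $\lambda_{1}^{1-1/q}$, use harmonic convexity of $\left\vert f^{\prime}\right\vert^{q}$ to bound the remaining integral by $\lambda_{2}\left\vert f^{\prime}(a)\right\vert^{q}+\lambda_{3}\left\vert f^{\prime}(b)\right\vert^{q}$, and evaluate the three weight integrals in closed form (the paper simply states these values, while you sketch the substitution and midpoint split that verifies them). The identification $\lambda_{3}=\lambda_{1}-\lambda_{2}$ and all constants match the paper.
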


\begin{proof}
From Lemma \ref{2.1} and using the power mean inequality, we have%
\begin{eqnarray*}
&&\left\vert \frac{f(a)+f(b)}{2}-\frac{ab}{b-a}\dint\limits_{a}^{b}\frac{f(x)%
}{x^{2}}dx\right\vert \\
&\leq &\frac{ab\left( b-a\right) }{2}\dint\limits_{0}^{1}\left\vert \frac{%
1-2t}{\left( tb+(1-t)a\right) ^{2}}\right\vert \left\vert f^{\prime }\left( 
\frac{ab}{tb+(1-t)a}\right) \right\vert dt \\
&\leq &\frac{ab\left( b-a\right) }{2}\left( \dint\limits_{0}^{1}\left\vert 
\frac{1-2t}{\left( tb+(1-t)a\right) ^{2}}\right\vert dt\right) ^{1-\frac{1}{q%
}} \\
&&\times \left( \dint\limits_{0}^{1}\left\vert \frac{1-2t}{\left(
tb+(1-t)a\right) ^{2}}\right\vert \left\vert f^{\prime }\left( \frac{ab}{%
tb+(1-t)a}\right) \right\vert ^{q}dt\right) ^{\frac{1}{q}}.
\end{eqnarray*}%
Hence, by harmonically convexity of $\left\vert f^{\prime }\right\vert ^{q}$
on $[a,b],$we have%
\begin{eqnarray*}
&&\left\vert \frac{f(a)+f(b)}{2}-\frac{ab}{b-a}\dint\limits_{a}^{b}\frac{f(x)%
}{x^{2}}dx\right\vert \\
&\leq &\frac{ab\left( b-a\right) }{2}\left( \dint\limits_{0}^{1}\frac{%
\left\vert 1-2t\right\vert }{\left( tb+(1-t)a\right) ^{2}}dt\right) ^{1-%
\frac{1}{q}} \\
&&\times \left( \dint\limits_{0}^{1}\frac{\left\vert 1-2t\right\vert }{%
\left( tb+(1-t)a\right) ^{2}}\left[ t\left\vert f^{\prime }\left( a\right)
\right\vert ^{q}+(1-t)\left\vert f^{\prime }\left( b\right) \right\vert ^{q}%
\right] dt\right) ^{\frac{1}{q}} \\
&\leq &\frac{ab\left( b-a\right) }{2}\lambda _{1}^{1-\frac{1}{q}}\left[
\lambda _{2}\left\vert f^{\prime }\left( a\right) \right\vert ^{q}+\lambda
_{3}\left\vert f^{\prime }\left( b\right) \right\vert ^{q}\right] ^{\frac{1}{%
q}}.
\end{eqnarray*}%
It is easily check that%
\begin{equation*}
\dint\limits_{0}^{1}\frac{\left\vert 1-2t\right\vert }{\left(
tb+(1-t)a\right) ^{2}}dt=\frac{1}{ab}-\frac{2}{\left( b-a\right) ^{2}}\ln
\left( \frac{\left( a+b\right) ^{2}}{4ab}\right) ,
\end{equation*}%
\begin{equation*}
\dint\limits_{0}^{1}\frac{\left\vert 1-2t\right\vert \left( 1-t\right) }{%
\left( tb+(1-t)a\right) ^{2}}dt=\frac{1}{a\left( b-a\right) }-\frac{3b+a}{%
\left( b-a\right) ^{3}}\ln \left( \frac{\left( a+b\right) ^{2}}{4ab}\right) ,
\end{equation*}%
\begin{equation*}
\dint\limits_{0}^{1}\frac{\left\vert 1-2t\right\vert t}{\left(
tb+(1-t)a\right) ^{2}}dt=\frac{-1}{b\left( b-a\right) }+\frac{3a+b}{\left(
b-a\right) ^{3}}\ln \left( \frac{\left( a+b\right) ^{2}}{4ab}\right) .
\end{equation*}
\end{proof}

\begin{theorem}
Let $f:I\subset \left( 0,\infty \right) \rightarrow 
\mathbb{R}
$ be a differentiable function on $I^{\circ }$, $a,b\in I$ with $a<b,$ and $%
f^{\prime }\in L[a,b].$ If $\left\vert f^{\prime }\right\vert ^{q}$ is
harmonically convex on $[a,b]$ for $q>1,\;\frac{1}{p}+\frac{1}{q}=1,$ then%
\begin{eqnarray}
&&\left\vert \frac{f(a)+f(b)}{2}-\frac{ab}{b-a}\dint\limits_{a}^{b}\frac{f(x)%
}{x^{2}}dx\right\vert  \label{2-6} \\
&\leq &\frac{ab\left( b-a\right) }{2}\left( \frac{1}{p+1}\right) ^{\frac{1}{p%
}}\left( \mu _{1}\left\vert f^{\prime }\left( a\right) \right\vert ^{q}+\mu
_{2}\left\vert f^{\prime }\left( b\right) \right\vert ^{q}\right) ^{\frac{1}{%
q}},  \notag
\end{eqnarray}%
where%
\begin{eqnarray*}
\mu _{1} &=&\frac{1}{2\left( b-a\right) ^{2}\left( 1-q\right) \left(
1-2q\right) }\left[ a^{2-2q}+b^{1-2q}\left[ \left( b-a\right) \left(
1-2q\right) -a\right] \right] , \\
\mu _{2} &=&\frac{1}{2\left( b-a\right) ^{2}\left( 1-q\right) \left(
1-2q\right) }\left[ b^{2-2q}-a^{1-2q}\left[ \left( b-a\right) \left(
1-2q\right) +b\right] \right] .
\end{eqnarray*}
\end{theorem}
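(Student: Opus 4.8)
The plan is to run the same scheme as in the previous theorem, replacing the power–mean inequality by Hölder's inequality (this is exactly what the hypothesis $q>1$ with $\tfrac1p+\tfrac1q=1$ is tailored for). First I would apply the integral representation (\ref{2-4}) of Lemma \ref{2.1} and pass to absolute values:
\[
\left\vert \frac{f(a)+f(b)}{2}-\frac{ab}{b-a}\int_a^b\frac{f(x)}{x^{2}}\,dx\right\vert \le \frac{ab(b-a)}{2}\int_0^1 \frac{\left\vert 1-2t\right\vert}{\left(tb+(1-t)a\right)^{2}}\left\vert f^{\prime}\!\left(\frac{ab}{tb+(1-t)a}\right)\right\vert dt .
\]
Writing the integrand as the product of $\left\vert 1-2t\right\vert$ and $\left(tb+(1-t)a\right)^{-2}\left\vert f^{\prime}(\,\cdot\,)\right\vert$ and applying Hölder's inequality with exponents $p$ and $q$ splits the right-hand side into
\[
\frac{ab(b-a)}{2}\left(\int_0^1\left\vert 1-2t\right\vert^{p}\,dt\right)^{\frac1p}\left(\int_0^1 \frac{\left\vert f^{\prime}\!\left(\frac{ab}{tb+(1-t)a}\right)\right\vert^{q}}{\left(tb+(1-t)a\right)^{2q}}\,dt\right)^{\frac1q}.
\]

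For the first factor, the elementary computation $\int_0^1\left\vert 1-2t\right\vert^{p}\,dt=\frac{1}{p+1}$ yields the factor $\left(\frac{1}{p+1}\right)^{1/p}$ in (\ref{2-6}). For the second factor I would use the harmonic convexity of $\left\vert f^{\prime}\right\vert^{q}$ on $[a,b]$: choosing $x=b$, $y=a$ in (\ref{2-1}), the point $\frac{ab}{tb+(1-t)a}$ stays in $[a,b]\subseteq I$ and
\[
\left\vert f^{\prime}\!\left(\frac{ab}{tb+(1-t)a}\right)\right\vert^{q}\le t\left\vert f^{\prime}(a)\right\vert^{q}+(1-t)\left\vert f^{\prime}(b)\right\vert^{q}.
\]
Substituting this bound and distributing the integral shows that the second factor is at most $\left(\mu_1\left\vert f^{\prime}(a)\right\vert^{q}+\mu_2\left\vert f^{\prime}(b)\right\vert^{q}\right)^{1/q}$, where
\[
\mu_1=\int_0^1\frac{t}{\left(tb+(1-t)a\right)^{2q}}\,dt,\qquad \mu_2=\int_0^1\frac{1-t}{\left(tb+(1-t)a\right)^{2q}}\,dt,
\]
and combining with the first factor gives precisely (\ref{2-6}).

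It then remains to verify that these two integrals equal the closed forms claimed in the statement. I would evaluate them via the linear substitution $u=tb+(1-t)a=a+t(b-a)$, so that $t=\frac{u-a}{b-a}$ and $dt=\frac{du}{b-a}$, which turns $\mu_1$ into $\frac{1}{(b-a)^{2}}\int_a^b\left(u^{1-2q}-a\,u^{-2q}\right)du$ and $\mu_2$ into $\frac{1}{(b-a)^{2}}\int_a^b\left(b\,u^{-2q}-u^{1-2q}\right)du$. Since $q>1$ we have $2-2q<0$ and $1-2q<0$, so the antiderivatives $\frac{u^{2-2q}}{2-2q}$ and $\frac{u^{1-2q}}{1-2q}$ are well defined and the constants $1-q$, $1-2q$ occurring in the denominators of $\mu_1,\mu_2$ are nonzero; evaluating at the endpoints and simplifying then produces the stated expressions. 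The only genuine obstacle is this last simplification: it is purely mechanical but bookkeeping-heavy, and one must collect the $a^{2-2q}$ and $b^{1-2q}$ terms carefully, using identities such as $2(1-q)-(1-2q)=1$ and $(1-2q)b-2(1-q)a=(b-a)(1-2q)-a$.
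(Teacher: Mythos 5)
Your proposal is correct and follows essentially the same route as the paper's own proof: the representation from Lemma \ref{2.1}, H\"{o}lder's inequality with $\left\vert 1-2t\right\vert$ in the $p$-factor and $\left( tb+(1-t)a\right) ^{-2q}\left\vert f^{\prime }\right\vert ^{q}$ in the $q$-factor, harmonic convexity of $\left\vert f^{\prime }\right\vert ^{q}$, and then the evaluation of $\int_{0}^{1}t\left( tb+(1-t)a\right) ^{-2q}dt$ and $\int_{0}^{1}(1-t)\left( tb+(1-t)a\right) ^{-2q}dt$, which the paper states as an easy calculation and you carry out explicitly via the substitution $u=a+t(b-a)$, correctly arriving at $\mu _{1}$ and $\mu _{2}$.
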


\begin{proof}
From Lemma \ref{2.1}, H\"{o}lder's inequality and the harmonically convexity
of $\left\vert f^{\prime }\right\vert ^{q}$ on $[a,b],$we have, 
\begin{eqnarray*}
\left\vert \frac{f(a)+f(b)}{2}-\frac{ab}{b-a}\dint\limits_{a}^{b}\frac{f(x)}{%
x^{2}}dx\right\vert &\leq &\frac{ab\left( b-a\right) }{2}\left(
\dint\limits_{0}^{1}\left\vert 1-2t\right\vert ^{p}dt\right) ^{\frac{1}{p}}
\\
&&\times \left( \dint\limits_{0}^{1}\frac{1}{\left( tb+(1-t)a\right) ^{2q}}%
\left\vert f^{\prime }\left( \frac{ab}{tb+(1-t)a}\right) \right\vert
^{q}dt\right) ^{\frac{1}{q}} \\
&\leq &\frac{ab\left( b-a\right) }{2}\left( \frac{1}{p+1}\right) ^{\frac{1}{p%
}} \\
&&\times \left( \dint\limits_{0}^{1}\frac{t\left\vert f^{\prime }\left(
a\right) \right\vert ^{q}+(1-t)\left\vert f^{\prime }\left( b\right)
\right\vert ^{q}}{\left( tb+(1-t)a\right) ^{2q}}dt\right) ^{\frac{1}{q}},
\end{eqnarray*}%
where an easy calculation gives%
\begin{eqnarray}
&&\dint\limits_{0}^{1}\frac{t}{\left( tb+(1-t)a\right) ^{2q}}dt  \label{2-7}
\\
&=&\frac{1}{2\left( b-a\right) ^{2}\left( 1-q\right) \left( 1-2q\right) }%
\left[ a^{2-2q}+b^{1-2q}\left[ \left( b-a\right) \left( 1-2q\right) -a\right]
\right]  \notag
\end{eqnarray}%
and%
\begin{eqnarray}
&&\dint\limits_{0}^{1}\frac{1-t}{\left( tb+(1-t)a\right) ^{2q}}dt
\label{2-8} \\
&=&\frac{1}{2\left( b-a\right) ^{2}\left( 1-q\right) \left( 1-2q\right) }%
\left[ b^{2-2q}-a^{1-2q}\left[ \left( b-a\right) \left( 1-2q\right) +b\right]
\right]  \notag
\end{eqnarray}%
Substituting equations (\ref{2-7}) and (\ref{2-8}) into the above inequality
results in the inequality (\ref{2-6}), which completes the proof.
\end{proof}

\section{Some applications for special means}

Let us recall the following special means of two nonnegative number $a,b$
with $b>a:$

\begin{enumerate}
\item The arithmetic mean%
\begin{equation*}
A=A\left( a,b\right) :=\frac{a+b}{2}.
\end{equation*}

\item The harmonic mean%
\begin{equation*}
H=H\left( a,b\right) :=\frac{2ab}{a+b}.
\end{equation*}

\item The Logarithmic mean%
\begin{equation*}
L=L\left( a,b\right) :=\frac{b-a}{\ln b-\ln a}.
\end{equation*}

\item The p-Logarithmic mean%
\begin{equation*}
L_{p}=L_{p}\left( a,b\right) :=\left( \frac{b^{p+1}-a^{p+1}}{(p+1)(b-a)}%
\right) ^{\frac{1}{p}},\ \ p\in 
\mathbb{R}
\backslash \left\{ -1,0\right\} .
\end{equation*}

\item the Identric mean%
\begin{equation*}
I=I\left( a,b\right) =\frac{1}{e}\left( \frac{b^{b}}{a^{a}}\right) ^{\frac{1%
}{b-a}}.
\end{equation*}
\end{enumerate}

These means are often used in numerical approximation and in other areas.
However, the following simple relationships are known in the literature:%
\begin{equation*}
H\leq G\leq L\leq I\leq A.
\end{equation*}%
It is also known that $L_{p}$ is monotonically increasing over $p\in 
\mathbb{R}
,$ denoting $L_{0}=I$ and $L_{-1}=L.$

\begin{proposition}
Let $0<a<b.$ Then we have the following inequality%
\begin{equation*}
H\leq \frac{G^{2}}{L}\leq A.
\end{equation*}
\end{proposition}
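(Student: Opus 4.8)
The plan is to apply Theorem \ref{2.2} to a well-chosen harmonically convex function and simply read off the three terms of (\ref{2-2}). The natural candidate is $f:\left(0,\infty\right)\rightarrow\mathbb{R}$, $f(x)=x$, which is harmonically convex on $\left(0,\infty\right)$ by the Example following the Definition. Since $0<a<b$ we have $[a,b]\subset\left(0,\infty\right)$ and $f\in L[a,b]$, so the hypotheses of Theorem \ref{2.2} are satisfied.

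Next I would evaluate each of the three expressions in (\ref{2-2}) for this $f$. The left-hand term is $f\left(\frac{2ab}{a+b}\right)=\frac{2ab}{a+b}=H$, and the right-hand term is $\frac{f(a)+f(b)}{2}=\frac{a+b}{2}=A$. For the middle term, using $f(x)/x^{2}=1/x$,
\[
\frac{ab}{b-a}\dint\limits_{a}^{b}\frac{f(x)}{x^{2}}dx=\frac{ab}{b-a}\dint\limits_{a}^{b}\frac{dx}{x}=\frac{ab\left(\ln b-\ln a\right)}{b-a}=ab\cdot\frac{\ln b-\ln a}{b-a}=\frac{G^{2}}{L},
\]
since $G^{2}=ab$ and $L=\left(b-a\right)/\left(\ln b-\ln a\right)$. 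Plugging these three values into the chain $f\left(\frac{2ab}{a+b}\right)\leq\frac{ab}{b-a}\dint\limits_{a}^{b}\frac{f(x)}{x^{2}}dx\leq\frac{f(a)+f(b)}{2}$ supplied by Theorem \ref{2.2} gives precisely $H\leq\frac{G^{2}}{L}\leq A$.

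There is essentially no genuine obstacle in this argument: the only inputs are the harmonic convexity of $f(x)=x$ (already recorded in the Example) and the elementary integral $\dint_{a}^{b}dx/x=\ln b-\ln a$. As a consistency check one may note that the conclusion is also implied by the classical chain $H\leq G\leq L\leq A$, since $G\leq L$ and $G\leq A$ give $G^{2}/L\leq G\leq A$, while $L\leq A$ gives $HL\leq 2abL/(a+b)\leq ab=G^{2}$, i.e.\ $H\leq G^{2}/L$; but the intended point is that the inequality is an immediate specialization of Theorem \ref{2.2}.
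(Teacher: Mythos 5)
Your proof is correct and is essentially the paper's own argument: the paper likewise obtains the chain $H\leq G^{2}/L\leq A$ by applying Theorem \ref{2.2} to $f(x)=x$ on $\left(0,\infty\right)$, with you simply writing out the evaluation of the three terms that the paper leaves implicit. No further comment is needed.
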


\begin{proof}
The assertion follows from the inequality (\ref{2-2}) in Theorem \ref{2.2},
for $f:\left( 0,\infty \right) \rightarrow 
\mathbb{R}
,\ f(x)=x.$
\end{proof}

\begin{proposition}
Let $0<a<b.$ Then we have the following inequality%
\begin{equation*}
H^{2}\leq G^{2}\leq A(a^{2},b^{2}).
\end{equation*}
\end{proposition}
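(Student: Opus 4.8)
The plan is to apply the Hermite--Hadamard inequality (\ref{2-2}) from Theorem \ref{2.2} to a carefully chosen function, exactly as in the proof of the preceding proposition (where the choice $f(x)=x$ produced $H\le G^{2}/L\le A$). The right-hand endpoint of the target chain is $A(a^{2},b^{2})=\frac{a^{2}+b^{2}}{2}$, which is precisely $\frac{f(a)+f(b)}{2}$ when $f(x)=x^{2}$; and the left-hand endpoint is $H^{2}=\left(\frac{2ab}{a+b}\right)^{2}=f\!\left(\frac{2ab}{a+b}\right)$ for the same $f$. So the natural candidate is $f:\left(0,\infty\right)\rightarrow\mathbb{R}$, $f(x)=x^{2}$.

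First I would check that $f(x)=x^{2}$ is admissible in Theorem \ref{2.2}, i.e.\ that it is harmonically convex on $\left(0,\infty\right)$. This is immediate from the first item of the Proposition following the Example: $f(x)=x^{2}$ is convex and nondecreasing on $\left(0,\infty\right)$, hence harmonically convex there; it is also clearly in $L[a,b]$ for every $0<a<b$. Thus all hypotheses of Theorem \ref{2.2} are met.

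Next I would evaluate the three quantities in (\ref{2-2}) for $f(x)=x^{2}$. The left term is $f\!\left(\frac{2ab}{a+b}\right)=\left(\frac{2ab}{a+b}\right)^{2}=H^{2}$. The middle term collapses because $f(x)/x^{2}\equiv 1$:
\[
\frac{ab}{b-a}\int_{a}^{b}\frac{f(x)}{x^{2}}\,dx=\frac{ab}{b-a}\int_{a}^{b}1\,dx=\frac{ab}{b-a}\left(b-a\right)=ab=G^{2}.
\]
The right term is $\frac{f(a)+f(b)}{2}=\frac{a^{2}+b^{2}}{2}=A(a^{2},b^{2})$. Substituting these into (\ref{2-2}) gives $H^{2}\le G^{2}\le A(a^{2},b^{2})$, which is exactly the claimed inequality.

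There is essentially no hard step here: the whole argument reduces to selecting the test function $f(x)=x^{2}$, and the only mild ``obstacle'' is recognizing that this choice is forced by matching the two endpoints $H^{2}$ and $A(a^{2},b^{2})$ of the target chain, after which the middle integral is trivial and harmonic convexity is supplied by the earlier Proposition.
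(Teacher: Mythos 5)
Your proposal is correct and follows exactly the paper's own route: applying inequality (\ref{2-2}) from Theorem \ref{2.2} with $f(x)=x^{2}$ on $\left(0,\infty\right)$. The paper states this choice without the verification details, so your added checks (harmonic convexity via the earlier Proposition and the evaluation of the three terms) simply make the same one-line argument explicit.
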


\begin{proof}
The assertion follows from the inequality (\ref{2-2}) in Theorem \ref{2.2},
for $f:\left( 0,\infty \right) \rightarrow 
\mathbb{R}
,\ f(x)=x^{2}.$
\end{proof}

\begin{proposition}
Let $0<a<b$ and $p\in \left( -1,\infty \right) \backslash \left\{ 0\right\}
. $ Then we have the following inequality%
\begin{equation*}
H^{p+2}\leq G^{2}.L_{p}^{p}\leq A(a^{p+2},b^{p+2}).
\end{equation*}
\end{proposition}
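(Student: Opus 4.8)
The plan is to apply Theorem~\ref{2.2} to the power function $f(x)=x^{p+2}$ on $(0,\infty)$, in exactly the same spirit as the two preceding propositions, which used $f(x)=x$ and $f(x)=x^{2}$.

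First I would verify that $f(x)=x^{p+2}$ is harmonically convex on $(0,\infty)$. Since $p\in(-1,\infty)\setminus\{0\}$ we have $p+2>1$, so $f'(x)=(p+2)x^{p+1}>0$ and $f''(x)=(p+2)(p+1)x^{p}>0$ for every $x>0$; hence $f$ is convex and nondecreasing on $(0,\infty)$. By the first assertion of the Proposition stated just after the Example, $f$ is then harmonically convex on $(0,\infty)$, and clearly $f\in L[a,b]$, so Theorem~\ref{2.2} applies.

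Next I would evaluate the three terms of (\ref{2-2}) for this $f$. The two outer terms are immediate: $f\!\left(\frac{2ab}{a+b}\right)=\left(\frac{2ab}{a+b}\right)^{p+2}=H^{p+2}$ and $\frac{f(a)+f(b)}{2}=\frac{a^{p+2}+b^{p+2}}{2}=A(a^{p+2},b^{p+2})$. For the middle term, $\frac{ab}{b-a}\int_{a}^{b}\frac{f(x)}{x^{2}}\,dx=\frac{ab}{b-a}\int_{a}^{b}x^{p}\,dx=ab\cdot\frac{b^{p+1}-a^{p+1}}{(p+1)(b-a)}=G^{2}L_{p}^{p}$, where I use $G^{2}=ab$ and the definition of the $p$-logarithmic mean (the exceptional cases $p=0,-1$ being excluded by hypothesis, so $L_{p}$ is defined). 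Substituting these three evaluations into the chain (\ref{2-2}) gives $H^{p+2}\le G^{2}L_{p}^{p}\le A(a^{p+2},b^{p+2})$, which is the desired inequality.

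The only point requiring a little care is the bookkeeping in the middle integral — namely recognizing $\frac{b^{p+1}-a^{p+1}}{(p+1)(b-a)}$ as $L_{p}^{p}$ (so that the exponent on $L_p$ is $p$, not $1$) and that multiplication by $ab$ produces the factor $G^{2}$. There is no genuine obstacle here, since the whole argument is a direct specialization of the already-established Theorem~\ref{2.2}.
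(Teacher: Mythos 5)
Your proposal is correct and follows exactly the paper's route: the paper likewise proves this proposition by applying the inequality (\ref{2-2}) of Theorem \ref{2.2} to $f(x)=x^{p+2}$ on $(0,\infty)$; you merely spell out the harmonic-convexity check (via $p+2>1$ and the earlier proposition) and the evaluation of the middle integral, which the paper leaves implicit. No issues.
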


\begin{proof}
The assertion follows from the inequality (\ref{2-2}) in Theorem \ref{2.2},
for $f:\left( 0,\infty \right) \rightarrow 
\mathbb{R}
,\ f(x)=x^{p+2},\ p\left( -1,\infty \right) \backslash \left\{ 0\right\} .$
\end{proof}

\begin{proposition}
Let $0<a<b.$ Then we have the following inequality%
\begin{equation*}
H^{2}\ln H\leq G^{2}\ln I\leq A\left( a^{2}\ln a,b^{2}\ln b\right) .
\end{equation*}
\end{proposition}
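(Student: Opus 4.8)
The plan is to apply Theorem \ref{2.2} to the function $f:(0,\infty)\rightarrow\mathbb{R}$ given by $f(x)=x^{2}\ln x$, exactly in the spirit of the three preceding propositions. Granting for the moment that $f$ is harmonically convex on the interval under consideration, one reads off the three terms of (\ref{2-2}) directly. The left-hand term is
\[
f\!\left(\frac{2ab}{a+b}\right)=\left(\frac{2ab}{a+b}\right)^{2}\ln\!\left(\frac{2ab}{a+b}\right)=H^{2}\ln H,
\]
and the right-hand term is
\[
\frac{f(a)+f(b)}{2}=\frac{a^{2}\ln a+b^{2}\ln b}{2}=A\!\left(a^{2}\ln a,\,b^{2}\ln b\right).
\]

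For the middle term the key point is that $f(x)/x^{2}=\ln x$, so, using $\int\ln x\,dx=x\ln x-x$,
\[
\frac{ab}{b-a}\int_{a}^{b}\frac{f(x)}{x^{2}}\,dx=\frac{ab}{b-a}\int_{a}^{b}\ln x\,dx=\frac{ab}{b-a}\bigl(b\ln b-a\ln a-(b-a)\bigr)=ab\left(\frac{b\ln b-a\ln a}{b-a}-1\right).
\]
Since $\ln I=\dfrac{b\ln b-a\ln a}{b-a}-1$ and $G^{2}=ab$, the middle term equals $G^{2}\ln I$. Substituting these three identities into (\ref{2-2}) yields precisely $H^{2}\ln H\leq G^{2}\ln I\leq A(a^{2}\ln a,b^{2}\ln b)$.

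I expect the real obstacle to be the harmonic convexity of $f(x)=x^{2}\ln x$, not the computation. Since $f^{\prime}(x)=x(2\ln x+1)$ changes sign at $x=e^{-1/2}$, the convenient criterion from the Proposition (convex and nondecreasing implies harmonically convex) is not available on all of $(0,\infty)$. Instead I would use the elementary fact that, under the change of variable $u=1/x$, harmonic convexity of $f$ on $[a,b]$ is equivalent to ordinary convexity of $h(u):=f(1/u)=-u^{-2}\ln u$ on $[1/b,1/a]$; a short computation gives $h^{\prime\prime}(u)=u^{-4}\bigl(5-6\ln u\bigr)$, which is nonnegative precisely when $u\leq e^{5/6}$. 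Hence $f$ is harmonically convex on $[a,b]$ as soon as $a\geq e^{-5/6}$, and the argument above completes the proof under that restriction on $a$; for smaller $a$ one should note that $x^{2}\ln x$ is no longer harmonically convex, so the statement is best read with this additional hypothesis.
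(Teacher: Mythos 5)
Your computation follows the paper's route exactly: the paper's proof is just the one-line remark that the claim follows from (\ref{2-2}) in Theorem \ref{2.2} applied to $f(x)=x^{2}\ln x$, and your identification of the three terms (in particular $\frac{ab}{b-a}\int_{a}^{b}\ln x\,dx=G^{2}\ln I$) is the verification the paper leaves implicit. Your additional caveat is not pedantry but a genuine correction: since harmonic convexity of $f$ on $[a,b]$ amounts to convexity of $h(u)=f(1/u)=-u^{-2}\ln u$ on $[1/b,1/a]$, and $h''(u)=u^{-4}(5-6\ln u)$ changes sign at $u=e^{5/6}$, the function $x^{2}\ln x$ is harmonically convex only for $x\geq e^{-5/6}$, so Theorem \ref{2.2} does not apply for small $a$; indeed the stated inequality actually fails there (e.g.\ for $a=0.01$, $b=0.02$ one finds $H^{2}\ln H\approx-7.68\times10^{-4}$ while $G^{2}\ln I\approx-8.44\times10^{-4}$, reversing the first inequality). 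So your proof is correct under the restriction $a\geq e^{-5/6}$, and the restriction is necessary --- the paper's unqualified statement is an oversight.
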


\begin{proof}
The assertion follows from the inequality (\ref{2-2}) in Theorem \ref{2.2},
for $f:\left( 0,\infty \right) \rightarrow 
\mathbb{R}
,\ f(x)=x^{2}\ln x.$
\end{proof}

\end{document}